\documentclass[a4paper]{amsart}
\numberwithin{equation}{section}
\newtheorem{theorem}{Theorem}[section]
\newtheorem{corollary}{Corollary}[section]
\newtheorem{definition}{Definition}[section]
\newtheorem{lemma}{Lemma}[section]

\theoremstyle{remark}
\newtheorem{remark}{Remark}[section]
\usepackage{amsmath, hyperref, amsfonts, amstext, amsgen, amsbsy, amsopn}
\usepackage{color}
\usepackage{graphicx}
\usepackage{subfigure}
\usepackage[margin=3cm]{geometry}
\title[Coefficient and Fekete-Szeg\"{o} problem estimates]
 {Coefficient and Fekete-Szeg\"{o} problem estimates for certain subclass of analytic and bi-univalent functions}
\subjclass[2010]{30C45}
\keywords{Univalent; Bi-univalent; Starlike; Fekete--Szeg\"{o} problem; Coefficient estimates.}
\begin{document}
\begin{abstract}
In this paper, we obtain the Fekete-Szeg\"{o} problem for the $k$-th $(k\geq1)$ root transform of the analytic and normalized functions $f$ satisfying the condition
\begin{equation*}
    1+\frac{\alpha-\pi}{2 \sin \alpha}<
    {\rm Re}\left\{\frac{zf'(z)}{f(z)}\right\} <
    1+\frac{\alpha}{2\sin \alpha} \quad (|z|<1),
\end{equation*}
where $\pi/2\leq \alpha<\pi$. Afterwards, by the above two-sided inequality we introduce and investigate a certain subclass of analytic and bi-univalent functions in the disk $|z|<1$ and obtain upper bounds for the first few coefficients and Fekete-Szeg\"{o} problem for functions belonging to this analytic and bi-univalent function class.
\end{abstract}
\author[H. Mahzoon]
       {H. Mahzoon}

\address{Department of Mathematics, Islamic Azad University, Firoozkouh
Branch, Firoozkouh, Iran}
\email {hesammahzoon1@gmail.com, mahzoon$_{-}$hesam@yahoo.com {\rm (H. Mahzoon)}}

\maketitle
\section{Introduction}
Let $\mathcal{A}$ be the class of functions $f$ of the form
\begin{equation}\label{f}
f(z)=z+ \sum_{n=2}^{\infty}a_{n}z^{n}=z+a_2z^2+a_3z^3+\cdots,
\end{equation}
which are analytic in the open unit disk $\Delta=\{z\in \mathbb{C} : |z|<1\}$ and normalized by the condition $f(0)=f'(0)-1=0$. Also let $\mathcal{P}$ be the class of functions $p$ analytic in $\Delta$ of the form
\begin{equation*}
  p(z)=1+p_1z+p_2z^2+\cdots+p_n z^n+\cdots,
\end{equation*}
such that ${\rm Re}\{p(z)\}>0$ for all $z\in\Delta$.
The subclass of all functions in $\mathcal{A}$ which are univalent (one-to-one) in $\Delta$ is denoted by $\mathcal{S}$. A well-known example for the class $\mathcal{S}$ is the {\it Koebe} function which has the following form
\begin{equation*}
  k(z):=\frac{z}{(1-z)^2}=z+2z^2+3z^3+\cdots+nz^n+\cdots\quad(z\in\Delta).
\end{equation*}
It is known that the Koebe function maps the open unit disk $\Delta$ onto the entire plane minus the interval $(-\infty,-1/4]$.
Also, the well-known {\it Koebe One-Quarter Theorem} states that the image of the open unit disk $\Delta$ under every function $f\in\mathcal{S}$ contains the disk $\{w:|w|<\frac{1}{4}\}$, see \cite[Theorem 2.3]{Duren}. Therefore, according to the above, every function $f$ in the class $\mathcal{S}$ has an inverse $f^{-1}$ which satisfies the following conditions:
\begin{equation*}
  f^{-1}(f(z))=z\quad(z\in\Delta)
\end{equation*}
and
\begin{equation*}
  f(f^{-1}(w))=w\quad(|w|<r_0(f);\ \ r_0(f)\geq1/4),
\end{equation*}
where
\begin{equation}\label{f inverse}
  f^{-1}(w)=w-a_2w^2+(2a_2^2-a_3)w^3-(5a_2^3-5a_2a_3+a_4)w^4+\cdots=:g(w).
\end{equation}
We say that a function $f\in\mathcal{A}$ is {\it bi-univalent} in $\Delta$ if, and only if, both $f$ and $f^{-1}$ are univalent in $\Delta$. We denote by $\Sigma$ the class of all bi-univalent functions in $\Delta$. The following functions
\begin{equation*}
  \frac{z}{1-z},\quad -\log(1-z)\quad {\rm and} \quad \frac{1}{2}\log\left(\frac{1+z}{1-z}\right),
\end{equation*}
with the corresponding inverse functions, respectively,
\begin{equation*}
  \frac{w}{1+w},\quad \frac{\exp(w)-1}{\exp(w)}\quad {\rm and} \quad\frac{\exp(2w)-1}{\exp(2w)+1},
\end{equation*}
belong to the class $\Sigma$.
It is clear that the Koebe function is not a member of the class $\Sigma$, also the following functions
\begin{equation*}
  z-\frac{1}{2}z^2 \quad {\rm and} \quad \frac{z}{1-z^2}.
\end{equation*}
The initial coefficients estimate of the class of bi-univalent functions $\Sigma$ is studied by Lewin in 1967 and he obtained the bound $1.51$ for the modulus of the second coefficient $|a_2|$, see \cite{Lewin1967}. Afterward, Brannan and Clunie conjectured that $|a_2| \leq \sqrt{2}$, see \cite{BC}. Finally, in 1969, Netanyahu \cite{Netanyahu} showed that $\max_{f\in\Sigma} |a_2|=4/3$. For the another coefficients $a_n$ $(n\geq3)$ the sharp estimate is presumably still an open problem.

Let $f$ and $g$ be two analytic functions in $\Delta$. We say that a function $f$ is subordinate to $g$, written as
\begin{equation*}
  f(z)\prec g(z)\quad{\rm or}\quad f\prec g,
\end{equation*}
if there exists a Schwarz function $w:\Delta\rightarrow\Delta$ with
the following properties
\begin{equation*}
  w(0)=0\quad{\rm and}\quad |w(z)|<1\quad(z\in\Delta),
\end{equation*}
such that $f(z)=g(w(z))$ for all $z\in\Delta$. In particular, if
$g\in\mathcal{S}$, then we have the following geometric equivalence
relation
\begin{equation*}
  f(z)\prec g(z)\Leftrightarrow f(0)=g(0)\quad{\rm and}\quad f(\Delta)\subset g(\Delta).
\end{equation*}

A function $f\in\mathcal{A}$ is called starlike (with respect to $0$) if $tw\in f(\Delta)$ whenever $w\in f(\Delta)$ and $t\in[0, 1]$. We denote by $\mathcal{S}^*$ the class of all starlike functions in $\Delta$. Also, we say that a function $f\in\mathcal{A}$ is starlike of order $\gamma$ ($0\leq\gamma<1$) if, and only if,
\begin{equation*}
  {\rm Re}\left\{\frac{zf'(z)}{f(z)}\right\}>\gamma\quad(z\in\Delta).
\end{equation*}
The class of the starlike functions of order $\gamma$ in $\Delta$ is denoted by $ \mathcal{S}^*(\gamma)$. As usual we put $\mathcal{S}^*(0)\equiv \mathcal{S}^*$.

We recall that a function $f\in\mathcal{A}$ belongs
to the class $\mathcal{M}(\alpha)$ if $f$ satisfies the following two-sided inequality
\begin{equation*}
    1+\frac{\alpha-\pi}{2 \sin \alpha}<
    {\rm Re}\left\{\frac{zf'(z)}{f(z)}\right\} <
    1+\frac{\alpha}{2\sin \alpha} \quad (z\in\Delta),
\end{equation*}
where $\pi/2\leq \alpha<\pi$. The class $\mathcal{M}(\alpha)$ was introduced by Kargar {\it et al.} in \cite{KES}. We define the function $\phi$ as follows
\begin{equation*}
  \phi(\alpha):=1+\frac{\alpha-\pi}{2 \sin \alpha}\quad(\pi/2\leq \alpha<\pi).
\end{equation*}
Since
\begin{equation*}
  2\phi'(\alpha)=[(\pi-\alpha)\cot \alpha+1]\csc \alpha\quad(\pi/2\leq \alpha<\pi),
\end{equation*}
therefore for each $\alpha\in[\pi/2,\pi)$ we see that $\phi'(\alpha)\neq0$.
On the other hand, since $\phi(\pi/2)=1-\pi/4\approx 0.2146$ and
\begin{equation*}
  \lim_{\alpha\rightarrow \pi^-}\phi(\alpha)=\frac{1}{2},
\end{equation*}
thus the class $\mathcal{M}(\alpha)$ is a subclass of the starlike functions of order $\gamma$ where $0.2146\leq \gamma<0.5$. By this fact that $\mathcal{S}^*(\gamma)\subset\mathcal{S}$ for each $\gamma\in[0,1)$, thus we conclude that the members of the class $\mathcal{M}(\alpha)$ are univalent in $\Delta$.

Now, we recall the following result for the class $\mathcal{M}(\alpha)$, see \cite[Lemma 1.1]{KES}.
\begin{lemma}\label{lemKES}
Let $f(z)\in \mathcal{A}$ and $\pi/2\leq \alpha<\pi$. Then $f\in\mathcal{M}(\alpha)$ if, and only if,
\begin{equation*}
\left(\frac{z f'(z)}{f(z)}-1\right)\prec \mathcal{B}_\alpha(z)
\quad (z\in\Delta),
\end{equation*}
where
\begin{equation}\label{B alpha}
    \mathcal{B}_\alpha(z):=\frac{1}{2i\sin
    \alpha}\log\left(\frac{1+ze^{i\alpha}}{1+ze^{-i\alpha}}\right)\quad
    (z\in \Delta).
\end{equation}
\end{lemma}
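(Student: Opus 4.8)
The plan is to show that the map $\mathcal{B}_\alpha$ carries the unit disk $\Delta$ conformally onto the vertical strip
\[
\Omega_\alpha := \left\{w\in\mathbb{C} : \frac{\alpha-\pi}{2\sin\alpha} < {\rm Re}(w) < \frac{\alpha}{2\sin\alpha}\right\},
\]
and then to invoke the geometric characterization of subordination recalled in the Introduction. Once the image is identified, the equivalence is essentially tautological: the class $\mathcal{M}(\alpha)$ is defined by requiring the real part of $\frac{zf'(z)}{f(z)}-1$ to lie in the interval $\left(\frac{\alpha-\pi}{2\sin\alpha}, \frac{\alpha}{2\sin\alpha}\right)$, which is exactly the condition that the image of $\frac{zf'(z)}{f(z)}-1$ lie in $\Omega_\alpha=\mathcal{B}_\alpha(\Delta)$.

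First I would analyze the inner M\"{o}bius transformation $q(z) := \frac{1+ze^{i\alpha}}{1+ze^{-i\alpha}}$. Its only pole, $z=-e^{i\alpha}$, lies on $\partial\Delta$, so $q$ sends $\partial\Delta$ to a straight line and $\Delta$ onto one of the two half-planes it bounds. Evaluating at the boundary points $z=\pm 1$ gives $q(1)=e^{i\alpha}$ and $q(-1)=-e^{i\alpha}$, so the boundary line is the line through the origin in the direction $e^{i\alpha}$. Since $q(0)=1$ has argument $0$, which lies strictly between $\alpha-\pi$ and $\alpha$ for $\pi/2\leq\alpha<\pi$, the disk is sent to the half-plane $\{w : \alpha-\pi<\arg w<\alpha\}$. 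As $0$ lies on the boundary of this half-plane, a single-valued branch of the logarithm is available there, and it maps the half-plane onto the horizontal strip $\{\zeta : \alpha-\pi<{\rm Im}\,\zeta<\alpha\}$. Finally, the rotation--scaling by $\frac{1}{2i\sin\alpha}=\frac{-i}{2\sin\alpha}$ (with $\sin\alpha>0$) turns this horizontal strip into the vertical strip $\Omega_\alpha$, establishing $\mathcal{B}_\alpha(\Delta)=\Omega_\alpha$.

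Next I would record that $\mathcal{B}_\alpha\in\mathcal{S}$. Univalence follows because $\mathcal{B}_\alpha$ is a composition of the injective maps $q$, the logarithm (injective on the half-plane $q(\Delta)$), and a nonzero scaling. The normalization is read off from the expansion $\log(1+ze^{i\alpha})-\log(1+ze^{-i\alpha})=2i\sin\alpha\,z-i\sin(2\alpha)z^2+\cdots$, which yields $\mathcal{B}_\alpha(z)=z-\cos\alpha\,z^2+\cdots$; in particular $\mathcal{B}_\alpha(0)=0$ and $\mathcal{B}_\alpha'(0)=1$, so $\mathcal{B}_\alpha\in\mathcal{A}$ and hence $\mathcal{B}_\alpha\in\mathcal{S}$.

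With this in hand, I would close the argument by applying the geometric equivalence for subordination against the univalent function $\mathcal{B}_\alpha$. Writing $p(z)=\frac{zf'(z)}{f(z)}-1$, we have $p(0)=0=\mathcal{B}_\alpha(0)$ automatically for every $f\in\mathcal{A}$, so $p\prec\mathcal{B}_\alpha$ holds if and only if $p(\Delta)\subset\mathcal{B}_\alpha(\Delta)=\Omega_\alpha$. By the description of $\Omega_\alpha$ as a vertical strip, this inclusion is exactly the two-sided inequality defining $\mathcal{M}(\alpha)$, which gives both directions of the equivalence. The main obstacle is the geometric step of the second paragraph---specifically, correctly selecting the half-plane $q(\Delta)$ and verifying that its argument range is precisely $(\alpha-\pi,\alpha)$, so that after the rotation the strip boundaries match the prescribed constants $\frac{\alpha-\pi}{2\sin\alpha}$ and $\frac{\alpha}{2\sin\alpha}$ exactly, rather than up to an additive ambiguity coming from the choice of logarithm branch.
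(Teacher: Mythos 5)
Your proof is correct. Note, however, that this paper does not prove the lemma at all: it is imported verbatim from \cite[Lemma 1.1]{KES}, and the supporting facts your argument establishes---that $\mathcal{B}_\alpha$ is univalent and that $\mathcal{B}_\alpha(\Delta)=\Omega_\alpha$---are simply asserted in the paper right after the lemma, without justification. Your conformal-mapping argument supplies exactly the missing content, and it is the natural route: the M\"{o}bius map $q(z)=(1+ze^{i\alpha})/(1+ze^{-i\alpha})$ has its pole on $\partial\Delta$, so $q(\Delta)$ is the half-plane $\{w:\alpha-\pi<\arg w<\alpha\}$ (your boundary computation $q(\pm1)=\pm e^{i\alpha}$ and the check $q(0)=1$ pin down which half-plane); the branch of $\log$ with $\arg\in(\alpha-\pi,\alpha)$ is the one consistent with the normalization $\mathcal{B}_\alpha(0)=0$ forced by the series \eqref{sumB}, which resolves the additive ambiguity you flag; and multiplication by $-i/(2\sin\alpha)$ with $\sin\alpha>0$ converts the horizontal strip into the vertical strip $\Omega_\alpha$ with exactly the stated boundary constants. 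The final step---since $\mathcal{B}_\alpha\in\mathcal{S}$ and $p(z)=zf'(z)/f(z)-1$ satisfies $p(0)=0=\mathcal{B}_\alpha(0)$, subordination is equivalent to the inclusion $p(\Delta)\subset\Omega_\alpha$, i.e.\ to the two-sided inequality defining $\mathcal{M}(\alpha)$---is a correct application of the geometric characterization of subordination recalled in the introduction.
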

The function $\mathcal{B}_\alpha(z)$ is convex univalent and has the form
\begin{equation}\label{sumB}
    \mathcal{B}_\alpha(z)=\sum_{n=1}^{\infty}A_n z^n \quad (z\in
    \Delta),
\end{equation}
where
\begin{equation*}
    A_n:= \frac{(-1)^{(n-1)}\sin n\alpha}{n \sin \alpha}
    \quad (n=1,2,\ldots).
\end{equation*}
Also we have $\mathcal{B}_\alpha(\Delta)=\Omega_\alpha$ where
\begin{equation*}
  \Omega_\alpha:=\left\{\zeta\in\mathbb{C}:\frac{\alpha-\pi}{2 \sin \alpha}<
    {\rm Re}\left\{\zeta\right\} <
    \frac{\alpha}{2\sin \alpha}, \ \ \frac{\pi}{2}\leq \alpha<\pi\right\}.
\end{equation*}

Very recently Sun {\it et al.} (see \cite{sun2019}) and Kwon and Sim (see \cite{kwonsim}) have studied the class $\mathcal{M}(\alpha)$. Sun {\it et al.} showed if the function $f$ of the form \eqref{f} belongs to the class $\mathcal{M}(\alpha)$, then $|a_n|\leq1$ while the estimate is not sharp. Subsequently, Kwon and Sim obtained sharp estimates on the initial coefficients $a_2$, $a_3$, $a_4$ and $a_5$ of the functions $f$ belonging to the class $\mathcal{M}(\alpha)$. The coefficient estimate problem for each of the Taylor-Maclaurin coefficients $|a_n|$ $(n=6,7,\ldots)$ is still an open question. Also, the logarithmic coefficients of the function $f\in\mathcal{M}(\alpha)$ were estimated by Kargar, see \cite{kargarJAnal}.

It is interesting to mention this subject that Brannan and Taha \cite{BT1986} introduced certain subclass of the bi-univalent function class $\Sigma$, denoted by $\mathcal{S}^*_\Sigma(\gamma)$ similar to the class of the starlike functions of order $\gamma$ $(0\leq\gamma<1)$. For each function $f\in\mathcal{S}^*_\Sigma(\gamma)$ they found non-sharp estimates for the initial
Taylor-Maclaurin coefficients. Recently, motivated by the Brannan and Taha's work, many authors investigated the coefficient bounds for various subclasses of the
bi-univalent function class $\Sigma$, see for instance \cite{Bulut, sharma2017, sri2018, sri2010, sri2013, sri2017, sri2015, sri2016}.

In this paper, motivated by the aforementioned works, we introduce and investigate a certain subclass of $\Sigma$ similar to the class $\mathcal{M}(\alpha)$ as follows.
\begin{definition}\label{Def M Singma alpha}
Let $\pi/2\leq \alpha<\pi$.
  A function $f\in\Sigma$ is said to be in the class $\mathcal{M}_\Sigma(\alpha)$, if the following inequalities hold:
\begin{equation*}
    1+\frac{\alpha-\pi}{2 \sin \alpha}<
    {\rm Re}\left\{\frac{zf'(z)}{f(z)}\right\} <
    1+\frac{\alpha}{2\sin \alpha} \quad (z\in\Delta)
\end{equation*}
and
\begin{equation*}
    1+\frac{\alpha-\pi}{2 \sin \alpha}<
    {\rm Re}\left\{\frac{wg'(w)}{g(w)}\right\} <
    1+\frac{\alpha}{2\sin \alpha} \quad (w\in\Delta),
\end{equation*}
where $g$ is defined by \eqref{f inverse}.
\end{definition}
\begin{remark}
Upon letting $\alpha\rightarrow\pi^-$ it is readily seen that a
function $f\in\Sigma$ is in the class $\mathcal{M}_\Sigma(1/2)$ if the following inequalities are satisfied:
\begin{equation*}
    {\rm Re}\left\{\frac{zf'(z)}{f(z)}\right\} >\frac{1}{2}
     \quad (z\in\Delta)
\end{equation*}
and
\begin{equation*}
    {\rm Re}\left\{\frac{wg'(w)}{g(w)}\right\} >\frac{1}{2}
    \quad (w\in\Delta),
\end{equation*}
where $g$ is defined by \eqref{f inverse}.
\end{remark}

The following lemma will be useful.
\begin{lemma}\label{lem. Fek-Sze}{\rm(}see \cite{Ma-M}{\rm)}
  Let the function $p$ be of the form belongs to the class $\mathcal{P}$. Then
for any complex number $\mu$ we have
\begin{equation*}
  \left|p_2-\mu p_1^2\right|\leq\left\{
                                  \begin{array}{ll}
                                    -4\mu+2, & \hbox{if\ \ $\mu\leq0$;} \\
                                    2, & \hbox{if\ \ $0\leq\mu\leq1$;} \\
                                    4\mu-2, & \hbox{if\ \ $\mu\geq1$.}
                                  \end{array}
                                \right.
\end{equation*}
The result is sharp for the cases $\mu<0$ or $\mu>1$ if and only if $p(z)=\frac{1+z}{1-z}$ or one of its rotations. If $0<\mu<1$, then the equality
holds if and only if $p(z)=\frac{1+z^2}{1-z^2}$ or one of its rotations. For the case $\mu=0$, the equality holds if and only if
\begin{equation*}
  p(z)=\frac{1}{2}(1+\nu)\frac{1+z}{1-z}+\frac{1}{2}(1-\nu)\frac{1-z}{1+z}\quad(0\leq \nu\leq1),
\end{equation*}
or one of its rotations. If $\mu=1$, the equality holds if and only if
\begin{equation*}
  \frac{1}{p(z)}=\frac{1}{2}(1+\nu)\frac{1+z}{1-z}+\frac{1}{2}(1-\nu)\frac{1-z}{1+z}
\quad(0\leq \nu\leq1),
\end{equation*}
or one of its rotations.
\end{lemma}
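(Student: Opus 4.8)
The plan is to reduce the problem to the classical parametrization of the Carathéodory class $\mathcal{P}$ and then to optimize an elementary quadratic. First I would use the rotation invariance of the functional: for any $\lambda$ with $|\lambda|=1$ the function $\widetilde{p}(z):=p(\lambda z)$ again belongs to $\mathcal{P}$ and has coefficients $\widetilde{p}_1=\lambda p_1$ and $\widetilde{p}_2=\lambda^2 p_2$, so that $|\widetilde{p}_2-\mu\widetilde{p}_1^2|=|p_2-\mu p_1^2|$. Choosing $\lambda$ appropriately, I may therefore assume that $c:=p_1$ is real with $c\in[0,2]$, the bound $|p_1|\le 2$ being the classical coefficient estimate for members of $\mathcal{P}$.

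Next I would invoke the well-known representation of the second coefficient of a Carathéodory function: for every $p\in\mathcal{P}$ there exists $x$ with $|x|\le 1$ such that
\[
  2p_2=p_1^2+(4-p_1^2)x.
\]
Substituting this together with $p_1=c$ gives
\[
  p_2-\mu p_1^2=\Bigl(\tfrac12-\mu\Bigr)c^2+\tfrac12(4-c^2)x,
\]
whence, by the triangle inequality and the facts $|x|\le 1$ and $4-c^2\ge 0$,
\[
  \bigl|p_2-\mu p_1^2\bigr|
  \le\Bigl|\tfrac12-\mu\Bigr|c^2+\tfrac12(4-c^2)
  =2+\Bigl(\bigl|\tfrac12-\mu\bigr|-\tfrac12\Bigr)c^2.
\]
The right-hand side is a quadratic in $c$ on $[0,2]$ whose behaviour is controlled entirely by the sign of its leading coefficient $\bigl|\tfrac12-\mu\bigr|-\tfrac12$. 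When $\bigl|\tfrac12-\mu\bigr|\le\tfrac12$, which for real $\mu$ means exactly $0\le\mu\le 1$, this coefficient is non-positive and the maximum is attained at $c=0$, giving the bound $2$; otherwise, that is for $\mu<0$ or $\mu>1$, it is positive and the maximum is attained at $c=2$, giving $2+4\bigl(\bigl|\tfrac12-\mu\bigr|-\tfrac12\bigr)=2|2\mu-1|$, which equals $-4\mu+2$ when $\mu<0$ and $4\mu-2$ when $\mu>1$. This produces the three asserted estimates.

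For sharpness I would retrace the two inequalities just used. Equality forces $|x|=1$ together with the optimal value of $c$, and this pins down the extremal functions. When $\mu<0$ or $\mu>1$ one needs $c=p_1=2$, which forces $p(z)=(1+z)/(1-z)$ up to a rotation; when $0<\mu<1$ one needs $c=0$ and $|x|=1$, which forces $p(z)=(1+z^2)/(1-z^2)$ up to a rotation. The borderline values $\mu=0$ and $\mu=1$ require separate treatment, since there the leading coefficient vanishes and \emph{every} $c\in[0,2]$ attains the bound $2$; the full family of extremizers is then recovered as the convex combinations of $(1+z)/(1-z)$ and $(1-z)/(1+z)$ displayed in the statement (for $\mu=1$ one applies the $\mu=0$ analysis to $1/p\in\mathcal{P}$, whose second coefficient is $p_1^2-p_2$).

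The main obstacle is not the optimization, which is routine, but rather the justification of the representation $2p_2=p_1^2+(4-p_1^2)x$ and, more delicately, the precise identification of the extremal functions in the two degenerate cases $\mu=0$ and $\mu=1$: there the maximizing $c$ is no longer unique, and one must argue that exactly the convex-combination forms attain equality.
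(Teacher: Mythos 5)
This lemma is not proved in the paper at all: it is imported verbatim, with a citation, from Ma and Minda \cite{Ma-M}, so there is no internal proof to compare against. Judged on its own merits, your argument is correct and is essentially the classical proof of this result. The rotation reduction is valid because $p(\lambda z)$ stays in $\mathcal{P}$ and the functional only picks up the unimodular factor $\lambda^2$; the identity $2p_2=p_1^2+(4-p_1^2)x$ with $|x|\le 1$ is the standard Libera--Z{\l}otkiewicz representation (a consequence of the Carath\'{e}odory--Toeplitz conditions); and the quadratic bound $2+\left(\left|\tfrac12-\mu\right|-\tfrac12\right)c^2$ on $c\in[0,2]$ yields exactly the three stated estimates, since $\left|\tfrac12-\mu\right|\le\tfrac12$ is equivalent to $0\le\mu\le1$ for real $\mu$. (Despite the paper's phrase ``any complex number $\mu$'', the trichotomy is only meaningful for real $\mu$, and your treatment of $\mu$ as real is consistent with Ma--Minda's original formulation.) Your sharpness analysis is also structured correctly: for $\mu<0$ or $\mu>1$ equality forces $c=2$, hence a rotation of $(1+z)/(1-z)$; for $0<\mu<1$ it forces $c=0$ and $|x|=1$, hence a rotation of $(1+z^2)/(1-z^2)$; and the $\mu=1$ case reduces neatly to $\mu=0$ via $1/p\in\mathcal{P}$, whose second coefficient is indeed $p_1^2-p_2$. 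The one step you assert rather than prove --- and correctly flag as the delicate point --- is that at $\mu=0$ the extremals are \emph{exactly} the stated convex combinations. This is closed by the Herglotz representation $p(z)=\int_{|\sigma|=1}\frac{1+\sigma z}{1-\sigma z}\,{\rm d}\nu(\sigma)$ with $\nu$ a probability measure: after a rotation, equality means $\int\sigma^2\,{\rm d}\nu(\sigma)=1$, which forces $\nu$ to be supported on $\{\sigma:\sigma^2=1\}=\{1,-1\}$, i.e.\ $p$ is a convex combination of $(1+z)/(1-z)$ and $(1-z)/(1+z)$ (the same representation also justifies that $|p_1|=2$ forces a point mass, hence a rotation of $(1+z)/(1-z)$). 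With that detail inserted, your proof is complete.
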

This paper is organized as follows. In Section \ref{sec2} we derive the Fekete-Szeg\"{o} coefficient functional associated with the $k$-th root transform for functions in the class $\mathcal{M}(\alpha)$. In Section \ref{sec3} we propose to find the estimates on the Taylor-Maclaurin coefficients $|a_2|$, $|a_3|$ and Fekete-Szeg\"{o} problem for functions in the class $\mathcal{M}_\Sigma(\alpha)$ which we introduced in Definition \ref{Def M Singma alpha}.
\section{Fekete-Szeg\"{o} problem for the class $\mathcal{M}(\alpha)$}\label{sec2}
Recently, many authors have obtained the Fekete-Szeg\"{o} coefficient functional associated with the $k$-th root transform for certain subclasses of analytic functions, see for instance \cite{ali2009, kesBooth, KMN}. In this section, we investigate this problem for the class $\mathcal{M}(\alpha)$.
At first, we recall that for a univalent function $f$ of the form \eqref{f}, the $k$-th root
transform is defined by
\begin{equation}\label{F(z)}
 F_k(z):=(f(z^k))^{1/k}=z+\sum_{n=1}^{\infty}b_{kn+1}z^{kn+1}\quad
 (z\in \Delta, k\geq1).
\end{equation}
For $f$ given by \eqref{f}, we have
\begin{equation}\label{FF(z)}
 (f(z^{k}))^{1/k}=z+\frac{1}{k}a_2z^{k+1}
 +\left(\frac{1}{k}a_3-\frac{1}{2}\frac{k-1}{k^2}a_2^2\right)z^{2k+1}+\cdots.
\end{equation}
Equating the coefficients of \eqref{F(z)} and \eqref{FF(z)} yields
\begin{equation}\label{bk+1 b2k+1}
 b_{k+1}=\frac{1}{k}a_2\quad {\rm and}\quad b_{2k+1}=\frac{1}{k}a_3-\frac{1}{2}\frac{k-1}{k^2}a_2^2.
\end{equation}
Now we have the following.
\begin{theorem}\label{t3.1}
Let $\pi/2\leq \alpha<\pi$ and $f\in\mathcal{M}(\alpha)$. If $F$ is the
$k$-th $(k\geq1)$ root transform of the function $f$ defined by \eqref{F(z)}, then for any
complex number $\mu$ we have
\begin{equation}\label{1t31}
 \left|b_{2k+1}-\mu
 b_{k+1}^2\right|\leq\left\{
                       \begin{array}{ll}
                         \frac{1}{2k}\left(1-\cos\alpha-\frac{2\mu+k-1}{k}\right), & \hbox{if\ \ $\mu\leq \delta_1$;} \\\\
                         \frac{1}{2k}, & \hbox{if\ \ $\delta_1\leq\mu\leq\delta_2$;} \\\\
                         \frac{1}{2k}\left(\cos\alpha+\frac{2\mu+k-1}{k}-1\right), & \hbox{$if\ \ \mu\geq \delta_2$,}
                       \end{array}
                     \right.
\end{equation}
where $\delta_1:=(1-k(1+\cos\alpha))/2$, $\delta_2:=(1+k(1-\cos\alpha))/2$ and $b_{2k+1}$ and $b_{k+1}$ are defined by \eqref{bk+1 b2k+1}. The result is sharp.
\end{theorem}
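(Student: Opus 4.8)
The plan is to translate the subordination characterisation of $\mathcal{M}(\alpha)$ supplied by Lemma \ref{lemKES} into a relation among the coefficients of an associated Carath\'eodory function, and then to collapse the Fekete--Szeg\"{o} functional into a single expression of the form $p_2-\nu p_1^2$ to which Lemma \ref{lem. Fek-Sze} applies verbatim.

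First I would invoke Lemma \ref{lemKES} to write $zf'(z)/f(z)-1=\mathcal{B}_\alpha(w(z))$ for some Schwarz function $w$ with $w(0)=0$. Introducing $p(z)=(1+w(z))/(1-w(z))=1+p_1z+p_2z^2+\cdots\in\mathcal{P}$, I would expand $w(z)=\tfrac12 p_1 z+\tfrac12\bigl(p_2-\tfrac12 p_1^2\bigr)z^2+\cdots$, substitute into the series \eqref{sumB} (using $A_1=1$ and $A_2=-\cos\alpha$), and match with the Taylor expansion $zf'(z)/f(z)-1=a_2z+(2a_3-a_2^2)z^2+\cdots$. Comparing the coefficients of $z$ and $z^2$ yields $a_2=\tfrac12 p_1$ and $a_3=\tfrac14 p_2-\tfrac{\cos\alpha}{8}p_1^2$.

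Next I would feed these into the $k$-th root relations \eqref{bk+1 b2k+1}. A short computation collapses the functional into
\begin{equation*}
 b_{2k+1}-\mu b_{k+1}^2=\frac{1}{4k}\bigl(p_2-\nu p_1^2\bigr),\qquad \nu:=\frac{k\cos\alpha+k-1+2\mu}{2k}.
\end{equation*}
Applying Lemma \ref{lem. Fek-Sze} to $\bigl|p_2-\nu p_1^2\bigr|$ and dividing by $4k$ then produces the three stated bounds, once the thresholds are rewritten in terms of $\mu$: the conditions $\nu\le 0$, $0\le\nu\le1$, $\nu\ge1$ are equivalent to $\mu\le\delta_1$, $\delta_1\le\mu\le\delta_2$, $\mu\ge\delta_2$ respectively, while $2\nu-1$ and $1-2\nu$ are precisely $\cos\alpha+(2\mu+k-1)/k-1$ and its negative.

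The computation is routine; the only delicate points are getting the coefficient $A_2=-\cos\alpha$ of $\mathcal{B}_\alpha$ right and carefully bookkeeping the $(k-1)/k^2$ term from \eqref{bk+1 b2k+1} when assembling $\nu$. For sharpness I would trace the extremal functions of Lemma \ref{lem. Fek-Sze} back through the substitution: in the two outer ranges equality is forced by $p(z)=(1+z)/(1-z)$ (so $p_1=p_2=2$), and in the middle range by $p(z)=(1+z^2)/(1-z^2)$ (so $p_1=0$, $p_2=2$); each choice determines a Schwarz function $w$, hence an extremal $f\in\mathcal{M}(\alpha)$, and hence its $k$-th root transform $F$ attaining the bound.
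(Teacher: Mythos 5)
Your proposal is correct and follows essentially the same route as the paper: the same subordination-to-Carath\'eodory conversion via $p=(1+w)/(1-w)$, the same coefficient identities $a_2=\tfrac12 p_1$, $a_3=\tfrac14 p_2-\tfrac{\cos\alpha}{8}p_1^2$, the same collapse to $\frac{1}{4k}\left(p_2-\nu p_1^2\right)$ with $\nu$ equal to the paper's $\mu'$, and the same appeal to Lemma \ref{lem. Fek-Sze}. Your sharpness argument also coincides with the paper's, which merely makes the traced-back extremal functions explicit as $f_1(z)=z\exp\left\{\int_0^z \mathcal{B}_\alpha(\xi)\xi^{-1}\,\mathrm{d}\xi\right\}$ (from $w(z)=z$) and the analogous function with $\mathcal{B}_\alpha(\xi^2)$ (from $w(z)=z^2$).
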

\begin{proof}
Let $\pi/2\leq \alpha<\pi$. If $f\in\mathcal{M}(\alpha)$, then by Lemma \ref{lemKES} and by definition of subordination, there exists a Schwarz function $w$ such that
\begin{equation}\label{2t31}
 \frac{zf'(z)}{f(z)}=1+\mathcal{B}_{\alpha}(w(z))\quad(z\in\Delta),
\end{equation}
where $\mathcal{B}_{\alpha}$ is defined by \eqref{B alpha}.
We define
\begin{equation}\label{3t31}
 p(z):=\frac{1+w(z)}{1-w(z)}=1+p_1z+p_2z^2+\cdots\quad(z\in\Delta).
\end{equation}
 It is clear that $p(0)=1$ and $p\in\mathcal{P}$. Relationships \eqref{sumB} and
\eqref{3t31} give us
\begin{equation*}
 1+\mathcal{B}_{\alpha}(w(z))=1+\frac{1}{2}A_1p_1z+\left(\frac{1}{4}A_2p_1^2
 +\frac{1}{2}A_1\left(p_2-\frac{1}{2}p_1^2\right)\right)z^2+\cdots,
\end{equation*}
where $A_1=1$ and $A_2=-\cos\alpha$. If we equate the coefficients of $z$ and $z^2$ on both sides of \eqref{2t31}, then we get
\begin{equation}\label{a2}
a_2=\frac{1}{2}p_1
\end{equation}
and
\begin{equation}\label{a3}
 a_3=\frac{1}{4}\left(p_2-\frac{1}{2}\cos\alpha p_1^2\right).
\end{equation}
From \eqref{bk+1 b2k+1}, \eqref{a2} and \eqref{a3}, we get
\begin{equation*}
 b_{k+1}=\frac{p_1}{2k},
\end{equation*}
and
\begin{equation*}
 b_{2k+1}=\frac{1}{4k}\left[p_2-\frac{1}{2}
 \left(\cos\alpha+\frac{k-1}{k}\right)p_1^2\right],
\end{equation*}
where $k\geq1$.
Therefore
\begin{equation*}
 b_{2k+1}-\mu
 b_{k+1}^2=\frac{1}{4k}\left[p_2-\frac{1}{2}\left(\cos\alpha+\frac{2\mu+k-1}{k}
 \right)p_1^2\right]\quad(\mu\in\mathbb{C}).
\end{equation*}
If we apply the Lemma \ref{lem. Fek-Sze} and letting
\begin{equation*}
 \mu':=\frac{1}{2}\left(\cos\alpha+\frac{2\mu+k-1}{k}
 \right),
\end{equation*}
then we get the desired inequality \eqref{1t31}.

From now, we shall show that the result is sharp. For the sharpness of the first and third cases of \eqref{1t31}, i.e. $\mu\leq \delta_1$ and $\mu\geq \delta_2$, respectively, consider the function
\begin{align*}
  f_1(z)&:=z\exp\left\{\int_{0}^{z}\frac{\mathcal{B}_\alpha(\xi)-1}{\xi}{\rm d}\xi\right\}\quad(z\in\Delta)\\
&=z+z^2+\frac{1}{2}(1-\cos\alpha)z^3+\frac{1}{18}(1-9\cos\alpha+8\cos^2\alpha)
z^4+\cdots,
\end{align*}
or one of its rotations.
It is easy to see that $f_1$ belongs to the class $\mathcal{M}(\alpha)$ and
\begin{equation*}
  (f_1(z^{k}))^{1/k}=z+\frac{1}{k}z^{k+1}
 +\left(\frac{1}{2k}(1-\cos\alpha)-\frac{1}{2}\frac{k-1}{k^2}\right)z^{2k+1}+\cdots.
\end{equation*}
The last equation shows that these inequalities are sharp. For the sharpness of the second inequality, we consider the function
\begin{align*}
  f_2(z):=z^2\exp\left\{\int_{0}^{z}\frac{\mathcal{B}_\alpha(\xi^2)-1}{\xi}{\rm d}\xi\right\}=z+\frac{1}{2}z^3+\cdots\quad(z\in\Delta).
\end{align*}
A simple calculation gives that
\begin{equation*}
  (f_2(z^{k}))^{1/k}=z+\frac{1}{2k}z^{2k+1}+\cdots.
\end{equation*}
Therefore the equality in the second inequality \eqref{1t31} holds for the $k$-th root transform of the above function $f_2$.
This completes the proof of Theorem \ref{t3.1}.
\end{proof}
The problem of finding sharp upper bounds for the coefficient
functional $|a_3-\mu a_2^2|$ for different subclasses of the
normalized analytic function class $\mathcal{A}$ is known as the
Fekete-Szeg\"{o} problem.
Therefore, if we let $k=1$ in the Theorem \ref{t3.1}, then we get the Fekete-Szeg\"{o} problem for the class $\mathcal{M}(\alpha)$ which we give in the following corollary.
\begin{corollary}\label{c3.1}
Let $\alpha\in[\pi/2,\pi)$ and $f\in\mathcal{M}(\alpha)$. Then for any
complex number $\mu$ we have
\begin{equation*}
 \left|a_3-\mu a_2^2\right|\leq\left\{
                                 \begin{array}{ll}
                                   \frac{1}{2}(1-\cos\alpha)-\mu, & \hbox{if\ \ $\mu\leq-\frac{1}{2}\cos\alpha$;} \\\\
                                   \frac{1}{2}, & \hbox{if\ \ $-\frac{1}{2}\cos\alpha\leq \mu\leq 1-\frac{1}{2}\cos\alpha$;} \\\\
                                   \frac{1}{2}(\cos\alpha-1)+\mu, & \hbox{if\ \ $\mu\geq1-\frac{1}{2}\cos\alpha$.}
                                 \end{array}
                               \right.
\end{equation*}
The result is sharp.
\end{corollary}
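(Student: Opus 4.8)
The plan is to obtain the corollary as the $k=1$ specialization of Theorem \ref{t3.1}, since the Fekete--Szeg\"{o} functional $a_3-\mu a_2^2$ coincides with $b_{2k+1}-\mu b_{k+1}^2$ precisely when $k=1$. First I would read off from \eqref{bk+1 b2k+1} that $k=1$ gives $b_2=a_2$, and that $b_3=a_3$ as well, because the correction term $\frac{1}{2}\frac{k-1}{k^2}a_2^2$ in $b_{2k+1}$ carries the factor $k-1$ and hence vanishes at $k=1$. Thus the left-hand side of \eqref{1t31} collapses exactly to $|a_3-\mu a_2^2|$, and nothing on that side needs to be reproved.

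Next I would evaluate the two breakpoints of Theorem \ref{t3.1} at $k=1$. A direct substitution yields $\delta_1=(1-(1+\cos\alpha))/2=-\frac{1}{2}\cos\alpha$ and $\delta_2=(1+(1-\cos\alpha))/2=1-\frac{1}{2}\cos\alpha$, which are precisely the thresholds separating the three cases in the corollary. It then remains only to simplify the three right-hand bounds of \eqref{1t31} at $k=1$: the first becomes $\frac{1}{2}(1-\cos\alpha-2\mu)=\frac{1}{2}(1-\cos\alpha)-\mu$, the middle one is simply $\frac{1}{2k}=\frac{1}{2}$, and the third becomes $\frac{1}{2}(\cos\alpha+2\mu-1)=\frac{1}{2}(\cos\alpha-1)+\mu$. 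These are exactly the asserted estimates.

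Because the statement is a mechanical specialization, I do not expect a genuine obstacle: the analytic content --- the subordination representation \eqref{2t31}, the coefficient identities \eqref{a2} and \eqref{a3}, and the application of Lemma \ref{lem. Fek-Sze} --- was already carried out in the proof of Theorem \ref{t3.1}. The only point requiring care is the bookkeeping that identifies the parameter $\nu=\frac{1}{2}\cos\alpha+\mu$ governing Lemma \ref{lem. Fek-Sze} (so that $a_3-\mu a_2^2=\frac{1}{4}(p_2-\nu p_1^2)$), and the verification that the case divisions $\nu\le0$, $0\le\nu\le1$, $\nu\ge1$ translate correctly into the $\mu$-intervals above. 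Finally, the sharpness is inherited directly from the extremal functions $f_1$ and $f_2$ constructed in the proof of Theorem \ref{t3.1}, specialized to $k=1$.
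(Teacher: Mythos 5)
Your proposal is correct and coincides with the paper's own derivation: the paper obtains Corollary \ref{c3.1} precisely by setting $k=1$ in Theorem \ref{t3.1}, and your computations of $\delta_1=-\tfrac{1}{2}\cos\alpha$, $\delta_2=1-\tfrac{1}{2}\cos\alpha$, the three simplified bounds, and the inheritance of sharpness from $f_1$ and $f_2$ all match. Your extra bookkeeping check via $\nu=\tfrac{1}{2}\cos\alpha+\mu$ in Lemma \ref{lem. Fek-Sze} is consistent with the identities \eqref{a2} and \eqref{a3} used in the theorem's proof.
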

Putting $\alpha=\pi/2$ in the Corollary \ref{c3.1} we get the following.
\begin{corollary}\label{c3.2}
  Let the function $f$ be given by \eqref{f} satisfies the inequality
  \begin{equation*}
    \left|{\rm Re}\left\{\frac{zf'(z)}{f(z)}\right\}-1\right|<\frac{\pi}{4}\quad (z\in \Delta).
  \end{equation*}
  Then for any complex number $\mu\in\mathbb{C}$ we have the following sharp inequalities
\begin{equation*}
 \left|a_3-\mu
 a_2^2\right|\leq\left\{
                   \begin{array}{ll}
                     \frac{1}{2}-\mu, & \hbox{if\ \ $\mu\leq0$;} \\\\
                     \frac{1}{2}, & \hbox{if\ \ $0\leq\mu\leq1$;} \\\\
                     \mu-\frac{1}{2}, & \hbox{if\ \ $\mu\geq1$.}
                   \end{array}
                 \right.
\end{equation*}
\end{corollary}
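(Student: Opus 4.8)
The plan is to recognize Corollary \ref{c3.2} as nothing more than the special case $\alpha=\pi/2$ of Corollary \ref{c3.1}, so the whole argument reduces to a substitution once the hypothesis is reinterpreted as membership in $\mathcal{M}(\pi/2)$.

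First I would check that the stated hypothesis is precisely the defining two-sided inequality of the class $\mathcal{M}(\pi/2)$. Setting $\alpha=\pi/2$ in the inequality defining $\mathcal{M}(\alpha)$ and using $\sin(\pi/2)=1$, the lower bound becomes $1+\frac{\pi/2-\pi}{2}=1-\frac{\pi}{4}$ and the upper bound becomes $1+\frac{\pi/2}{2}=1+\frac{\pi}{4}$. Hence the condition $1-\frac{\pi}{4}<\mathrm{Re}\left\{\frac{zf'(z)}{f(z)}\right\}<1+\frac{\pi}{4}$ is exactly $\left|\mathrm{Re}\left\{\frac{zf'(z)}{f(z)}\right\}-1\right|<\frac{\pi}{4}$, so the functions satisfying the hypothesis are precisely the members of $\mathcal{M}(\pi/2)$.

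Next I would invoke Corollary \ref{c3.1} with $\alpha=\pi/2$ and use $\cos(\pi/2)=0$. The two thresholds collapse, since $-\frac{1}{2}\cos\alpha$ becomes $0$ and $1-\frac{1}{2}\cos\alpha$ becomes $1$; simultaneously the three bounds $\frac{1}{2}(1-\cos\alpha)-\mu$, $\frac{1}{2}$, and $\frac{1}{2}(\cos\alpha-1)+\mu$ become $\frac{1}{2}-\mu$, $\frac{1}{2}$, and $\mu-\frac{1}{2}$. This reproduces the three cases of the claimed inequality verbatim.

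Finally, sharpness is inherited directly from Corollary \ref{c3.1} (equivalently from Theorem \ref{t3.1} at $k=1$), whose extremal functions $f_1$ and $f_2$ one simply specializes to $\alpha=\pi/2$. Because this is a pure specialization rather than a new estimate, there is no genuine obstacle in the proof; the only point deserving care is the reinterpretation of the absolute-value hypothesis as the two-sided inequality that defines $\mathcal{M}(\pi/2)$, which is the one-line computation carried out in the first step.
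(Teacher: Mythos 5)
Your proposal is correct and is exactly the paper's own argument: the paper derives Corollary~\ref{c3.2} by the single remark ``Putting $\alpha=\pi/2$ in the Corollary~\ref{c3.1}{}'', which is precisely your specialization $\sin(\pi/2)=1$, $\cos(\pi/2)=0$, with sharpness inherited from Theorem~\ref{t3.1}. Nothing is missing; your reinterpretation of the absolute-value hypothesis as membership in $\mathcal{M}(\pi/2)$ is the same one-line check the paper leaves implicit.
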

If we let $\alpha\rightarrow \pi^{-}$ in the Corollary \ref{c3.1}, then we have:
\begin{corollary}
  If the function $f$ of the form \eqref{f} is starlike of order $1/2$, then for any complex number $\mu\in\mathbb{C}$ the following sharp inequalities hold true.
  \begin{equation*}
 \left|a_3-\mu a_2^2\right|\leq\left\{
                            \begin{array}{ll}
                            1-\mu, & \hbox{if\ \ $\mu\leq\frac{1}{2}$;} \\\\
                            \frac{1}{2}, & \hbox{$\frac{1}{2}\leq\mu\leq\frac{3}{2}$;} \\\\
                            \mu-1, & \hbox{if\ \ $\mu\geq\frac{3}{2}$.}
                            \end{array}
                            \right.
\end{equation*}
\end{corollary}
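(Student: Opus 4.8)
The plan is to obtain this statement as the limiting case $\alpha\to\pi^-$ of Corollary \ref{c3.1}, but I would first make precise the sense in which the class $\mathcal{M}(\alpha)$ degenerates to the starlike functions of order $1/2$. As $\alpha\to\pi^-$ one has $\sin\alpha\to 0^+$, so the upper bound $1+\frac{\alpha}{2\sin\alpha}\to+\infty$ and the right-hand inequality in the definition of $\mathcal{M}(\alpha)$ becomes vacuous; meanwhile, writing $\beta=\pi-\alpha$, the lower bound $\phi(\alpha)=1+\frac{\alpha-\pi}{2\sin\alpha}=1-\frac{\beta}{2\sin\beta}\to\frac12$, exactly as recorded before Lemma \ref{lemKES}. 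Thus the two-sided constraint collapses to ${\rm Re}\{zf'(z)/f(z)\}>1/2$, that is, $f\in\mathcal{S}^*(1/2)$.

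Next I would substitute $\cos\alpha\to-1$ into each branch of Corollary \ref{c3.1}. The two breakpoints become $-\tfrac12\cos\alpha\to\tfrac12$ and $1-\tfrac12\cos\alpha\to\tfrac32$, while the three bounds $\tfrac12(1-\cos\alpha)-\mu$, $\tfrac12$, $\tfrac12(\cos\alpha-1)+\mu$ become $1-\mu$, $\tfrac12$, $\mu-1$ respectively. This already reproduces the claimed table.

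To make the passage to the limit rigorous rather than merely formal, I would redo the short computation behind Corollary \ref{c3.1} directly for $f\in\mathcal{S}^*(1/2)$. Since ${\rm Re}\{zf'(z)/f(z)\}>1/2$, the function $p:=2\,zf'(z)/f(z)-1$ lies in $\mathcal{P}$; equivalently $\frac{zf'(z)}{f(z)}-1\prec\frac{z}{1-z}=\lim_{\alpha\to\pi^-}\mathcal{B}_\alpha(z)$, which is precisely the $\cos\alpha=-1$ specialization appearing in the proof of Theorem \ref{t3.1}. Comparing Taylor coefficients exactly as in \eqref{a2}--\eqref{a3}, with $\cos\alpha$ replaced by $-1$, gives $a_2=\tfrac12 p_1$ and $a_3=\tfrac14\big(p_2+\tfrac12 p_1^2\big)$, whence
\begin{equation*}
 a_3-\mu a_2^2=\frac14\left(p_2-\Big(\mu-\tfrac12\Big)p_1^2\right).
\end{equation*}

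Finally I would invoke Lemma \ref{lem. Fek-Sze} with $\mu':=\mu-\tfrac12$. The three regimes $\mu'\le0$, $0\le\mu'\le1$, $\mu'\ge1$ correspond to $\mu\le\tfrac12$, $\tfrac12\le\mu\le\tfrac32$, $\mu\ge\tfrac32$, and the bounds $-4\mu'+2$, $2$, $4\mu'-2$ divided by $4$ yield exactly $1-\mu$, $\tfrac12$, $\mu-1$, together with the sharpness assertions inherited from the lemma. The only genuinely delicate point is the first paragraph: one must verify that the $\alpha\to\pi^-$ limit of the two-sided constraint is \emph{exactly} order-$1/2$ starlikeness (vacuous upper bound, lower bound tending to $1/2$) and that $\mathcal{B}_\alpha(z)\to z/(1-z)$. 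Once these identifications are in place the coefficient functional is immediate and there is no further obstacle.
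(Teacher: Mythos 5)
Your proposal is correct, and its endpoint computation is exactly what the paper's derivation amounts to --- but you go a useful step further than the paper does. The paper obtains this corollary purely formally, by letting $\alpha\to\pi^-$ (i.e.\ $\cos\alpha\to-1$) in Corollary \ref{c3.1}, which is precisely your second paragraph, and it offers no justification that this passage to the limit is legitimate. You correctly identify that this needs care: the hypothesis here is $f\in\mathcal{S}^*(1/2)$, and no inclusion $\mathcal{S}^*(1/2)\subset\mathcal{M}(\alpha)$ holds for any fixed $\alpha<\pi$ (for instance $z/(1-z)$ is starlike of order $1/2$ but ${\rm Re}\{zf'(z)/f(z)\}$ is unbounded above, violating the upper constraint), so one cannot simply apply Corollary \ref{c3.1} to such an $f$ and take limits in the conclusion. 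Your third and fourth paragraphs close this gap: for $f\in\mathcal{S}^*(1/2)$ the function $p=2zf'/f-1$ lies in $\mathcal{P}$, the identities $a_2=\frac{1}{2}p_1$ and $a_3=\frac{1}{4}\left(p_2+\frac{1}{2}p_1^2\right)$ are the $\cos\alpha=-1$ specialization of \eqref{a2}--\eqref{a3}, and Lemma \ref{lem. Fek-Sze} with $\mu'=\mu-\frac{1}{2}$ yields the three branches, with sharpness inherited from the lemma's extremal functions (which do produce members of $\mathcal{S}^*(1/2)$ via $zf'/f=\frac{1}{2}(1+p)$). So your argument is rigorous where the paper's is only formal; an alternative repair --- applying Corollary \ref{c3.1} to the dilations $f(rz)/r$, which do belong to $\mathcal{M}(\alpha)$ for $\alpha$ close enough to $\pi$, and letting $r\to1^-$ --- would also work, but your direct route is cleaner.
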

From \eqref{a2} and \eqref{a3} and the first case of the Lemma \ref{lem. Fek-Sze} we get.
\begin{corollary}
  If a function $f\in\mathcal{A}$ of the form $\eqref{f}$ belongs to the class $\mathcal{M}(\alpha)$ $(\pi/2\leq\alpha<\pi)$, then the following sharp inequalities hold.
\begin{equation*}
  |a_2|\leq1\quad{and}\quad|a_3|\leq \frac{1}{2}(1-\cos\alpha).
\end{equation*}
\end{corollary}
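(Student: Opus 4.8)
The plan is to reuse the coefficient identities already derived in the proof of Theorem \ref{t3.1}. There we introduced the Carath\'eodory function $p\in\mathcal{P}$ via \eqref{3t31} and obtained
\begin{equation*}
  a_2=\frac{1}{2}p_1\qquad\text{and}\qquad a_3=\frac{1}{4}\left(p_2-\frac{1}{2}\cos\alpha\,p_1^2\right),
\end{equation*}
see \eqref{a2} and \eqref{a3}. Both desired bounds will follow from these two relations together with classical facts about the class $\mathcal{P}$, so no new machinery is needed.

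For the first estimate I would invoke the standard Carath\'eodory coefficient inequality $|p_1|\leq 2$, valid for every $p\in\mathcal{P}$. Substituting into \eqref{a2} gives $|a_2|=\frac{1}{2}|p_1|\leq 1$ at once.

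For the second estimate I would rewrite \eqref{a3} as $a_3=\frac{1}{4}\left(p_2-\mu\,p_1^2\right)$ with $\mu:=\frac{1}{2}\cos\alpha$. The key observation is that the hypothesis $\pi/2\leq\alpha<\pi$ forces $\cos\alpha\in(-1,0]$, so that $\mu\leq 0$; hence the \emph{first} branch of Lemma \ref{lem. Fek-Sze} is the relevant one. It yields $|p_2-\mu p_1^2|\leq -4\mu+2=2(1-\cos\alpha)$, and dividing by $4$ produces $|a_3|\leq\frac{1}{2}(1-\cos\alpha)$.

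Finally, sharpness requires exhibiting an extremal function, and I would use $f_1$ from the proof of Theorem \ref{t3.1}, whose expansion begins $f_1(z)=z+z^2+\frac{1}{2}(1-\cos\alpha)z^3+\cdots$; reading off its coefficients shows $a_2=1$ and $a_3=\frac{1}{2}(1-\cos\alpha)$, so both inequalities are attained simultaneously. There is no genuine obstacle in this argument, since it is a direct specialization of earlier computations; the only point demanding care is verifying that the sign condition $\mu\leq 0$ selects the first case of the lemma, which is precisely where the restriction on $\alpha$ enters.
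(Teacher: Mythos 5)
Your proposal is correct and follows essentially the same route as the paper, which derives the corollary directly from the identities \eqref{a2} and \eqref{a3} together with the first case ($\mu\leq 0$) of Lemma \ref{lem. Fek-Sze}, exactly as you do with $\mu=\tfrac{1}{2}\cos\alpha\leq 0$ and the Carath\'eodory bound $|p_1|\leq 2$. Your explicit sharpness check via the extremal function $f_1$, whose expansion gives $a_2=1$ and $a_3=\tfrac{1}{2}(1-\cos\alpha)$, is left implicit in the paper but is the right verification.
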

\section{Coefficient estimate and Fekete-Szeg\"{o} problem for the class $\mathcal{M}_\Sigma(\alpha)$}\label{sec3}
In this section, motivated by the Zaprawa's work (see \cite{Zaprawa}) we shall obtain the Fekete-Szeg\"{o} problem for the class $\mathcal{M}_\Sigma(\alpha)$. Also, we obtain upper bounds for the first coefficients $|a_2|$ and $|a_3|$ of the function $f$ of the form \eqref{f} belonging to the class $\mathcal{M}_\Sigma(\alpha)$. The coefficient estimate problem for each of the coefficients $|a_n|$ $(n\geq 4)$ is an open question.
Moreover, we apply the same technique as in \cite{ali2012}.
\begin{theorem}\label{t4.1}
  Let the function $f$ given by \eqref{f} be in the class $\mathcal{M}_\Sigma(\alpha)$ and $\pi/2\leq \alpha<\pi$. Then
\begin{equation}\label{estimate of a2}
  |a_2|\leq \sqrt{\frac{2}{2+\cos\alpha}}
\end{equation}
and for any real number $\mu$ we have
\begin{equation*}
  |a_3-\mu a_2^2|\leq
\left\{
  \begin{array}{ll}
    \frac{1}{2}, & \hbox{if\ \ $|1-\mu|\leq \frac{1}{2}\left(1+\frac{1}{2}\cos\alpha\right)$;} \\\\
    \frac{|1-\mu|}{1+\frac{1}{2}\cos\alpha}, & \hbox{if\ \ $|1-\mu|\geq \frac{1}{2}\left(1+\frac{1}{2}\cos\alpha\right)$.}
  \end{array}
\right.
\end{equation*}
\end{theorem}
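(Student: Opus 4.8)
The plan is to run the standard Brannan--Taha scheme for bi-univalent classes, with $\mathcal{B}_\alpha$ playing the role of the subordinant. Since $f\in\mathcal{M}_\Sigma(\alpha)$, Lemma~\ref{lemKES} applies to \emph{both} $f$ and its inverse $g$ from \eqref{f inverse}, yielding the two subordinations $\frac{zf'(z)}{f(z)}-1\prec\mathcal{B}_\alpha(z)$ and $\frac{wg'(w)}{g(w)}-1\prec\mathcal{B}_\alpha(w)$. I would realize these through Schwarz functions $u,v$ and pass to the Carath\'eodory functions $p(z)=\frac{1+u(z)}{1-u(z)}=1+p_1z+p_2z^2+\cdots$ and $q(w)=\frac{1+v(w)}{1-v(w)}=1+q_1w+q_2w^2+\cdots$ in $\mathcal{P}$, exactly as in \eqref{3t31}. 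This puts everything in a form where the classical Carath\'eodory bounds $|p_n|,|q_n|\le 2$ are available; the Fekete--Szeg\"o Lemma~\ref{lem. Fek-Sze} is \emph{not} the natural tool here, because two different functions $p,q$ are involved.

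Next I would expand both sides to second order. Using the series \eqref{sumB} (with $A_1=1$, $A_2=-\cos\alpha$) together with the expansion of $\frac{zf'}{f}$, equating the coefficients of $z$ and $z^2$ reproduces \eqref{a2}--\eqref{a3}. For the inverse, I would first read off from \eqref{f inverse} the coefficients of $g$, namely $-a_2$ at $w^2$ and $2a_2^2-a_3$ at $w^3$, then compute the expansion of $\frac{wg'}{g}-1$ and match it against $\mathcal{B}_\alpha(v(w))$; this gives a companion pair of relations in $q_1,q_2$. The first-order equations force $p_1=-q_1$, hence $p_1^2=q_1^2$, which is the identity that makes the subsequent symmetric combinations collapse.

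The two manipulations that do the real work are then dual to one another. \emph{Adding} the two second-order relations eliminates $a_3$ and, after substituting $p_1^2=4a_2^2$, expresses $a_2^2$ as a constant multiple of $p_2+q_2$; taking moduli and applying $|p_2|,|q_2|\le 2$ produces the bound \eqref{estimate of a2}. (The competing estimate $|a_2|=|p_1|/2\le 1$ from the first-order relation is weaker on $\pi/2\le\alpha<\pi$ and may be discarded.) \emph{Subtracting} the same two relations eliminates the quadratic term and gives $a_3-a_2^2=\tfrac18(p_2-q_2)$, so that $a_3-\mu a_2^2=(1-\mu)a_2^2+\tfrac18(p_2-q_2)$. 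Inserting the value of $a_2^2$ turns this into a \emph{real} linear combination $\lambda_1 p_2+\lambda_2 q_2$ whose two coefficients are symmetric about a common center proportional to $(1-\mu)/(2+\cos\alpha)$, with offset $\pm\tfrac18$.

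The final, and genuinely delicate, step is the estimation of this combination. The triangle inequality with $|p_2|,|q_2|\le 2$ gives $|a_3-\mu a_2^2|\le 2(|\lambda_1|+|\lambda_2|)$, and since $\mu$ is real the symmetry of the two coefficients lets me write $|\lambda_1|+|\lambda_2|=2\max\{\,\cdot\,,\,\cdot\,\}$. I expect the main obstacle to be exactly this $\max$-analysis: the two regimes in the statement arise according to whether $|1-\mu|$ lies below or above the threshold at which the constant offset $\tfrac18$ and the $\mu$-dependent center balance. In the low regime the bound collapses to the constant $\tfrac12$; in the high regime it grows linearly in $|1-\mu|$, yielding the two cases. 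Since the theorem asserts no sharpness, the argument can stop at this two-case estimate without having to exhibit extremal functions.
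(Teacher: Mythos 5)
Your proposal is correct and follows essentially the same route as the paper: the Zaprawa-type scheme with two Carath\'eodory functions (the paper's $k,l$, your $p,q$), the first-order relation $k_1=-l_1$, then adding and subtracting the second-order coefficient relations to express $a_2^2$ as a multiple of $k_2+l_2$ and to get $a_3-a_2^2=\tfrac18(k_2-l_2)$, finishing with the triangle-inequality/$\max$ analysis from $|k_2|,|l_2|\le 2$. The paper's proof is exactly this, including the center-plus-offset decomposition you describe, namely coefficients $\hbar(\mu)\pm\tfrac18$ with $\hbar(\mu)$ proportional to $(1-\mu)/(2+\cos\alpha)$, and it likewise proves no sharpness claim.
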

\begin{proof}
  Let $f\in\mathcal{M}_\Sigma(\alpha)$ be of the form \eqref{f} and $g=f^{-1}$ be given by \eqref{f inverse}. Then by Definition \ref{Def M Singma alpha}, Lemma \ref{lemKES} and definition of subordination there exist two Schwarz functions $u:\Delta\rightarrow\Delta$ and $v:\Delta\rightarrow\Delta$ with the properties $u(0)=0=v(0)$, $|u(z)|<1$ and $|v(z)|<1$ such that
\begin{equation}\label{2sub1}
  \frac{zf'(z)}{f(z)}=1+\mathcal{B}_{\alpha}(u(z))\quad (z\in\Delta)
\end{equation}
and
\begin{equation}\label{2sub2}
\frac{wg'(w)}{g(w)}=1+\mathcal{B}_{\alpha}(v(z))\quad (z\in\Delta),
\end{equation}
where $\mathcal{B}_{\alpha}$ is defined by \eqref{B alpha}. Now we define the functions $k$ and $l$, respectively as follows
\begin{equation*}
  k(z)=\frac{1+u(z)}{1-u(z)}=1+k_1z+k_2z^2+\cdots\quad (z\in\Delta)
\end{equation*}
and
\begin{equation*}
 l(z)=\frac{1+v(z)}{1-v(z)}=1+l_1z+l_2z^2+\cdots\quad (z\in\Delta)
\end{equation*}
or equivalently
\begin{equation}\label{u}
  u(z)=\frac{k(z)-1}{k(z)+1}=\frac{1}{2}\left(k_1z+\left(k_2-\frac{1}{2}k_1^2\right)z^2
  +\cdots\right)
\end{equation}
and
\begin{equation}\label{v}
  v(z)=\frac{l(z)-1}{l(z)+1}=\frac{1}{2}\left(l_1z+\left(l_2-\frac{1}{2}l_1^2\right)z^2
  +\cdots\right).
\end{equation}
It is clear that the functions $k$ and $l$ belong to class $\mathcal{P}$ and $|k_i|\leq2$ and $|l_i|\leq2$ $(i=1,2,\ldots)$. 
From \eqref{sumB}, \eqref{2sub1}-\eqref{v}, we have
\begin{align}\label{1p4}
  \frac{zf'(z)}{f(z)}&=1+\mathcal{B}_{\alpha}\left(\frac{k(z)-1}{k(z)+1}\right)\\
  &=1+\frac{1}{2}A_1k_1z+\left(\frac{1}{2}A_1\left(k_2-\frac{1}{2}k_1^2\right)
  +\frac{1}{4}A_2k_1^2\right)z^2+\cdots\nonumber,
\end{align}
and
\begin{align}\label{2p4}
  \frac{wg'(w)}{g(w)}&=1+\mathcal{B}_{\alpha}\left(\frac{l(z)-1}{l(z)+1}\right)\\
  &=1+\frac{1}{2}A_1l_1z+\left(\frac{1}{2}A_1\left(l_2-\frac{1}{2}l_1^2\right)
  +\frac{1}{4}A_2l_1^2\right)z^2+\cdots\nonumber.
\end{align}
where $A_1=1$ and $A_2=-\cos\alpha$. Thus, upon comparing the corresponding coefficients in \eqref{1p4} and \eqref{2p4}, we obtain
\begin{equation}\label{a22}
  a_2=\frac{1}{2}A_1k_1=\frac{1}{2}k_1,
\end{equation}
\begin{equation}\label{a33}
 2a_3-a_2^2=\frac{1}{2}A_1\left(k_2-\frac{1}{2}k_1^2\right)+\frac{1}{4}A_2k_1^2
=\frac{1}{2}\left(k_2-\frac{1}{2}k_1^2\right)-\frac{k_1^2}{4}\cos\alpha,
 \end{equation}
 \begin{equation}\label{-a2}
   -a_2=\frac{1}{2}A_1l_1=\frac{1}{2}l_1,
\end{equation}
and
\begin{equation}\label{a333}
  3a_2^2-2a_3=\frac{1}{2}A_1\left(l_2-\frac{1}{2}l_1^2\right)+\frac{1}{4}A_2l_1^2
=\frac{1}{2}\left(l_2-\frac{1}{2}l_1^2\right)-\frac{l_1^2}{4}\cos\alpha.
\end{equation}
From equations \eqref{a22} and \eqref{-a2}, we can easily see that
\begin{equation}\label{k=-l}
  k_1=-l_1
\end{equation}
and
\begin{equation*}
  8a_2^2=(k_1^2+l_1^2).
\end{equation*}
If we add \eqref{a33} to \eqref{a333}, we get
\begin{equation}\label{2 a22= 1 2}
  2a_2^2=\frac{1}{2}\left[\left(k_2-\frac{1}{2}k_1^2\right)
  +\left(l_2-\frac{1}{2}l_1^2\right)\right]-\frac{1}{4}\cos\alpha\left(k_1^2+l_1^2\right).
\end{equation}
Substituting \eqref{a22}, \eqref{-a2} and \eqref{k=-l} into \eqref{2 a22= 1 2}, we obtain
\begin{equation}\label{k 1 pow 2}
  k_1^2=\frac{k_2+l_2}{2(1+(\cos\alpha)/2)}.
\end{equation}
Now, \eqref{a22} and \eqref{k 1 pow 2} imply that
\begin{equation}\label{a2 pow 2}
  a_2^2=\frac{k_2+l_2}{2(2+\cos\alpha)}.
\end{equation}
Since $|k_2|\leq2$ and $|l_2|\leq2$, \eqref{a2 pow 2} implies that
\begin{equation*}
  |a_2|\leq \sqrt{\frac{2}{2+\cos\alpha}},
\end{equation*}
which proves the first assertion \eqref{estimate of a2} of Theorem \ref{t4.1}.
Now, if we subtract \eqref{a333} from \eqref{a33} and use of \eqref{k=-l}, we get
\begin{equation}\label{a3=a2 pow 2}
  a_3=a_2^2+\frac{1}{8}(k_2-l_2).
\end{equation}
From \eqref{a2 pow 2} and \eqref{a3=a2 pow 2} it follows that
\begin{equation*}
  a_3-\mu a_2^2=\left(\frac{1}{8}+\hbar(\mu)\right)k_2
  +\left(\hbar(\mu)-\frac{1}{8}\right)l_2\quad(\mu\in\mathbb{R}),
\end{equation*}
where
\begin{equation*}
  \hbar(\mu):=\frac{1-\mu}{2(2+\cos\alpha)}\quad(\mu\in\mathbb{R}).
\end{equation*}
Since $|k_2|\leq2$ and $|l_2|\leq2$, we conclude that
\begin{equation*}
  |a_3-\mu a_2^2|\leq\left\{
                     \begin{array}{ll}
                      \frac{1}{2}, & \hbox{if\ \ $0\leq|\hbar(\mu)|\leq\frac{1}{8}$;} \\\\
                        4|\hbar(\mu)|, & \hbox{if\ \ $|\hbar(\mu)|\geq\frac{1}{8}$.}
                       \end{array}
                     \right.
\end{equation*}
This completes the proof.
\end{proof}
Taking $\mu=0$ in the above Theorem \ref{t4.1} we get.
\begin{corollary}
  Let $f$ of the form \eqref{f} be in the class $\mathcal{M}_\Sigma(\alpha)$. Then
\begin{equation*}
  |a_3|\leq \frac{1}{1+\frac{1}{2}\cos\alpha}\quad(\pi/2\leq\alpha<\pi).
\end{equation*}
\end{corollary}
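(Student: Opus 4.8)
The plan is to read this off directly from the Fekete--Szeg\"o estimate of Theorem \ref{t4.1} by specializing to $\mu=0$, so the only real work is to identify which branch of the piecewise bound governs. Setting $\mu=0$, the quantity $a_3-\mu a_2^2$ becomes simply $a_3$, and the case distinction in Theorem \ref{t4.1} is controlled by $|1-\mu|=|1-0|=1$. Thus I first record that $|a_3|$ is bounded by one of the two expressions according to whether $1\leq\tfrac12\bigl(1+\tfrac12\cos\alpha\bigr)$ or $1\geq\tfrac12\bigl(1+\tfrac12\cos\alpha\bigr)$.

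Next I would verify that the second branch always applies on the admissible range $\pi/2\leq\alpha<\pi$. On this interval $\cos\alpha\in(-1,0]$, so $1+\tfrac12\cos\alpha\in(\tfrac12,1]$ and hence the threshold satisfies $\tfrac12\bigl(1+\tfrac12\cos\alpha\bigr)\in(\tfrac14,\tfrac12]$. In particular this threshold never exceeds $\tfrac12$, which is strictly less than $1=|1-\mu|$. Therefore for every $\alpha\in[\pi/2,\pi)$ we are unambiguously in the regime $|1-\mu|\geq\tfrac12\bigl(1+\tfrac12\cos\alpha\bigr)$, and Theorem \ref{t4.1} gives
\begin{equation*}
  |a_3|=|a_3-0\cdot a_2^2|\leq\frac{|1-0|}{1+\frac12\cos\alpha}=\frac{1}{1+\frac12\cos\alpha},
\end{equation*}
which is exactly the asserted bound.

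There is essentially no obstacle here beyond the case-checking above; the substance of the argument already resides in Theorem \ref{t4.1}. As an independent confirmation I would, if desired, rederive the bound straight from the relations established in the proof of that theorem, namely $a_3=a_2^2+\tfrac18(k_2-l_2)$ together with $a_2^2=(k_2+l_2)/\bigl(2(2+\cos\alpha)\bigr)$. Writing $a_3$ as $\bigl(\tfrac{1}{2(2+\cos\alpha)}+\tfrac18\bigr)k_2+\bigl(\tfrac{1}{2(2+\cos\alpha)}-\tfrac18\bigr)l_2$ and noting that $\tfrac{1}{2(2+\cos\alpha)}\geq\tfrac14>\tfrac18$ on the given range (so both coefficients are nonnegative), the bounds $|k_2|\leq2$ and $|l_2|\leq2$ yield $|a_3|\leq\tfrac{2}{2+\cos\alpha}=\tfrac{1}{1+\frac12\cos\alpha}$, in agreement with the specialization of Theorem \ref{t4.1}.
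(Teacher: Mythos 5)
Your proposal is correct and follows exactly the paper's route: the corollary is obtained by taking $\mu=0$ in Theorem \ref{t4.1}, and your verification that $|1-\mu|=1$ always exceeds the threshold $\tfrac12\bigl(1+\tfrac12\cos\alpha\bigr)\in\bigl(\tfrac14,\tfrac12\bigr]$ for $\alpha\in[\pi/2,\pi)$ correctly identifies the governing branch, a case-check the paper leaves implicit. Your supplementary derivation from \eqref{a2 pow 2} and \eqref{a3=a2 pow 2} is also sound and consistent.
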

If we let $\alpha\rightarrow \pi^-$ in the Theorem \ref{t4.1}, we get the following.
\begin{corollary}
  If the function $f$ of the form \eqref{f} belongs to the class $\mathcal{M}_\Sigma(1/2)$, then $|a_2|\leq 1$ and
\begin{equation*}
  |a_3-\mu a_2^2|\leq\left\{
                       \begin{array}{ll}
                         \frac{1}{2}, & \hbox{if \ \ $|1-\mu|\leq \frac{1}{4}$;} \\\\
                         2|1-\mu|, & \hbox{if \ \ $|1-\mu|\geq \frac{1}{4}$,}
                       \end{array}
                     \right.
\end{equation*}
where $\mu$ is real.
\end{corollary}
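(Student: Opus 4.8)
The plan is to derive this corollary as the boundary case $\alpha\to\pi^-$ of Theorem \ref{t4.1}. The first step is to check that its hypothesis is the correct degeneration of the hypothesis in Theorem \ref{t4.1}: by the remark following Definition \ref{Def M Singma alpha}, as $\alpha\to\pi^-$ the two-sided condition defining $\mathcal{M}_\Sigma(\alpha)$ collapses to the pair ${\rm Re}\{zf'(z)/f(z)\}>\tfrac12$ and ${\rm Re}\{wg'(w)/g(w)\}>\tfrac12$ that defines $\mathcal{M}_\Sigma(1/2)$, where $g=f^{-1}$ is given by \eqref{f inverse}. Hence $\mathcal{M}_\Sigma(1/2)$ is precisely the limiting instance of the family $\mathcal{M}_\Sigma(\alpha)$, and the conclusions of Theorem \ref{t4.1} should pass to it.

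The computation is then driven by the single fact $\lim_{\alpha\to\pi^-}\cos\alpha=-1$. Substituting $\cos\alpha=-1$ into the Fekete--Szeg\"o part of Theorem \ref{t4.1} is immediate: the threshold $\tfrac12\bigl(1+\tfrac12\cos\alpha\bigr)$ becomes $\tfrac14$ and the nontrivial bound $|1-\mu|/\bigl(1+\tfrac12\cos\alpha\bigr)$ becomes $2|1-\mu|$, which is exactly the two-case inequality asserted. It then remains to secure the bound $|a_2|\leq1$, which I would obtain together with a rigorous justification of the limit as follows.

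For a self-contained argument that makes $|a_2|\leq1$ transparent, I would re-run the proof of Theorem \ref{t4.1} with $\mathcal{B}_\alpha$ replaced by its limit $\mathcal{B}(z)=z/(1-z)$; in terms of the generating coefficients of \eqref{sumB} this amounts to $A_1=1$ and $A_2=-\cos\alpha\to1$. Setting up the two Schwarz functions for $f$ and $g=f^{-1}$, passing to $k,l\in\mathcal{P}$ as in \eqref{u}--\eqref{v} and comparing coefficients, the value $A_2=1$ makes the $k_1^2$ and $l_1^2$ contributions cancel in the analogues of \eqref{a33} and \eqref{a333}. Adding these two relations then isolates $a_2^2$ as a clean multiple of $k_2+l_2$, and $|k_2|,|l_2|\leq2$ yields $|a_2|\leq1$; subtracting them expresses $a_3-a_2^2$ as a multiple of $k_2-l_2$, after which $a_3-\mu a_2^2$ is a linear combination of $k_2$ and $l_2$ that is controlled by the triangle inequality, the two-case split arising from comparing $|1-\mu|$ with the relevant constant.

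I expect the only genuine obstacle to be the legitimacy of the limit itself, that is, the degeneration of the subordinating function. The coefficients $A_n=(-1)^{n-1}\sin(n\alpha)/(n\sin\alpha)$ in \eqref{sumB} are of the indeterminate form $0/0$ as $\alpha\to\pi^-$, since $\sin(n\alpha)$ and $\sin\alpha$ both vanish there, so one must evaluate $\lim_{\alpha\to\pi^-}A_n$ carefully (each tends to $1$, whence $\mathcal{B}_\alpha\to z/(1-z)$) rather than substitute naively. Once this is settled, the remainder is the routine coefficient bookkeeping already performed in Theorem \ref{t4.1}, now specialized to $\cos\alpha=-1$; and if one prefers to invoke Theorem \ref{t4.1} directly, the only point to verify is that its piecewise bounds depend continuously on $\cos\alpha$ up to the endpoint $-1$.
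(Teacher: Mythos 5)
Your proposal is correct, and it is in fact more careful than the paper's own derivation, which consists of the single line ``let $\alpha\to\pi^-$ in Theorem \ref{t4.1}.'' The crucial point, which you implicitly detected, is that this one-line substitution does not actually deliver the first assertion: putting $\cos\alpha=-1$ into \eqref{estimate of a2} gives $|a_2|\le\sqrt{2}$, not $|a_2|\le 1$. Your remedy --- re-running the proof of Theorem \ref{t4.1} with $A_1=A_2=1$ --- is the right one: with $A_2=1$ the $k_1^2$ and $l_1^2$ terms cancel in \eqref{a33} and \eqref{a333}, addition gives $a_2^2=\tfrac14(k_2+l_2)$, and $|k_2|,|l_2|\le2$ yields $|a_2|\le1$. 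This computation also exposes where the mismatch in the paper comes from: passing from \eqref{k 1 pow 2} to \eqref{a2 pow 2} with $a_2=\tfrac12k_1$ should give $a_2^2=(k_2+l_2)/\bigl(4(2+\cos\alpha)\bigr)$, so \eqref{a2 pow 2} is off by a factor of $2$; with the corrected formula the theorem's bound is $|a_2|\le 1/\sqrt{2+\cos\alpha}$, whose limit is indeed $1$, reconciling theorem and corollary. So your re-run simultaneously proves the corollary and repairs the paper's argument.

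Two loose ends to tighten. First, your self-contained re-run does not reproduce the stated constants: since your $\hbar$ becomes $(1-\mu)/4$, it gives $|a_3-\mu a_2^2|\le\tfrac12$ for $|1-\mu|\le\tfrac12$ and $\le|1-\mu|$ for $|1-\mu|\ge\tfrac12$, which is \emph{stronger} than the corollary's threshold $\tfrac14$ and bound $2|1-\mu|$; you should say explicitly that the claimed inequalities follow because $\max\{\tfrac12,|1-\mu|\}\le 2|1-\mu|$ whenever $|1-\mu|\ge\tfrac14$, rather than leave ``the relevant constant'' unspecified. Second, your fallback of invoking Theorem \ref{t4.1} directly and checking continuity of the bounds in $\cos\alpha$ cannot be repaired by continuity alone: a function in $\mathcal{M}_\Sigma(1/2)$ satisfies only the half-plane conditions ${\rm Re}\{zf'(z)/f(z)\}>\tfrac12$ and ${\rm Re}\{wg'(w)/g(w)\}>\tfrac12$, and the half-plane is contained in none of the strips defining $\mathcal{M}_\Sigma(\alpha)$ for $\alpha<\pi$. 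For instance $f(z)=z/(1-z)$ lies in $\mathcal{M}_\Sigma(1/2)$, yet ${\rm Re}\{zf'(z)/f(z)\}={\rm Re}\{1/(1-z)\}$ is unbounded in $\Delta$, so $f\notin\mathcal{M}_\Sigma(\alpha)$ for any $\alpha<\pi$; hence Theorem \ref{t4.1} is never applicable to such $f$ and there is nothing to pass to the limit in. The re-run is therefore the only rigorous route of the two you describe, and note that it does not actually require the $0/0$ limits of the $A_n$: the classical equivalence ${\rm Re}\,p>\tfrac12\Leftrightarrow p-1\prec z/(1-z)$ supplies the needed subordination directly.
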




\begin{thebibliography}{9}

\bibitem{ali2012} R.M. Ali, S.K. Lee, V. Ravichandran and S. Supramaniam, \textit{Coefficient estimates for bi-univalent Ma-Minda starlike and convex functions}, Appl. Math. Lett. {\bf25} (2012), 344-351.

\bibitem{ali2009} R.M. Ali, S.K. Lee, V. Ravichandran and S. Supramaniam, \textit{The Fekete-Szeg\"{o} coefficient functional for transforms of analytic functions}, Bull. Iran. Math. Soc. {\bf35} (2011), 119-142.

\bibitem{BC} D.A. Brannan, J.G. Clunie (Eds.), \textit{Aspects of Contemporary Complex Analysis} (Proceedings of the NATO Advanced Study Institute held at the University of Durham, Durham; July 1-20, 1979), Academic Press, New York and London, 1980.

\bibitem{BT1986}D.A. Brannan, T.S. Taha, \textit{On some classes of bi-univalent functions}, in: S.M. Mazhar, A. Hamoui, N.S. Faour (Eds.), Mathematical Analysis and Its Applications, Kuwait; February 18-21, 1985, in: KFAS Proceedings Series, vol. 3, Pergamon Press (Elsevier Science Limited), Oxford, 1988, pp. 53-60;
see also Studia Univ. Babe\c{s}-Bolyai Math. {\bf31} (2) (1986), 70-77.

\bibitem{Bulut} S. Bulut, \textit{Coefficient estimates for a class of analytic bi-univalent functions related to Pseudo-starlike functions}, Miskolc Math. Notes {\bf19} (2018), 149-156.

\bibitem{Duren} P.L. Duren, \textit{Univalent Functions}, Springer--Verlag, New York, 1983.

\bibitem{kargarJAnal} R. Kargar, \textit{On logarithmic coefficients of certain starlike functions related to the vertical strip}, J. Anal. (2018). https://doi.org/10.1007/s41478-018-0157-7

\bibitem{KES} R. Kargar, A. Ebadian and J. Sok\'{o}{\l}, \textit{Radius problems for some subclasses of analytic functions}, Complex Anal. Oper. Theory {\bf11} (2017), 1639--1649.

\bibitem{kesBooth} R. Kargar, A. Ebadian and J. Sok\'{o}{\l}, \textit{On Booth lemniscate and starlike functions}, Anal. Math. Phys. {\bf9} (2019), 143-154.

\bibitem{KMN} R. Kargar, H. Mahzoon and N. Kanzi, \textit{Some inequalities for a certain subclass of starlike functions}, arXiv:1804.06435 [math.CV]

\bibitem{kwonsim} O.S. Kwon and Y.J. Sim, \textit{On coefficient problems for starlike functions related to vertical strip domain}, Commun. Korean Math. Soc. {\bf34} (2019), 451-464.

\bibitem{Lewin1967} M. Lewin, \textit{On a coefficient problem for bi-univalent function}, Proc. Amer. Math. Soc. {\bf18} (1967), 63--68.

\bibitem{Ma-M} W. Ma and D. Minda, \textit{A unified treatment of some special classes of univalent functions} In Proceedings of the
Conference On Complex Analysis, Tianjin, China, 19-23 June 1992; pp. 157-169.

\bibitem{Netanyahu} E. Netanyahu, \textit{The minimal distance of the image boundary from the origin and the second coefficient of a univalent function in $|z|<1$}, Arch. Rational Mech. Anal. {\bf32} (1969) 100-112.

\bibitem{sharma2017} P. Sharma and A. Nigam, \textit{The Fekete-Szeg\"{o} problem for a Ma-Minda type class of bi-univalent functions associated with the Hohlov operator}, Asian-Eur. J. Math. {\bf10} (2017) 1750052 (15 pages)

\bibitem{sri2018} H.M. Srivastava, \c{S}. Altınkaya and S. Yal\c{c}in, \textit{Certain subclasses of bi-univalent functions associated with the Horadam polynomials}, Iran. J. Sci. Technol. Trans. Sci.
https://doi.org/10.1007/s40995-018-0647-0

\bibitem{sri2010} H.M. Srivastava, A.K. Mishra and P. Gochhayat, \textit{Certain subclasses of analytic and bi-univalent functions}, Appl. Math. Lett.
{\bf23} (2010), 1188-1192.

\bibitem{sri2013} H.M. Srivastava, S. Bulut, M. \c{C}a\u{g}lar M and N. Ya\u{g}mur, \textit{Coefficient estimates for a general subclass of analytic and bi-univalent
functions}, Filomat {\bf27} (2013), 831-842.

\bibitem{sri2017} H.M. Srivastava, S. Gaboury and F. Ghanim, \textit{Coefficient estimates for some general subclasses of analytic and bi-univalent functions}, Afrika Mat {\bf28} (2017), 693-706.

\bibitem{sri2015} H.M. Srivastava, S. S\"{u}mer Eker and R.M. Ali, \textit{Coefficient bounds for a certain class of analytic and bi-univalent
functions}, Filomat {\bf29}:8 (2015), 1839-1845.

\bibitem{sri2016} H.M. Srivastava, S. S\"{u}mer Eker, S.G. Hamidi and J.M. Jahangiri, \textit{Faber polynomial coefficient estimates for bi-univalent
functions defined by the Tremblay fractional derivative operator}, Bull. Iran. Math. Soc. https://doi.org/10.1007/s41980-018-0011-3

\bibitem{sun2019} Y. Sun, Z.-G. Wang, A. Rasila and Janusz Sok\'{o}{\l}, \textit{On a subclass of starlike functions associated with a vertical strip domain}, J. Ineq. Appl. (2019) 2019: 35. https://doi.org/10.1186/s13660-019-1988-8

\bibitem{Zaprawa} P. Zaprawa, \textit{On the Fekete-Szeg\"{o} problem for classes of
bi-univalent functions}, Bull. Belg. Math. Soc. Simon Stevin {\bf21} (2014), 169-178.

\end{thebibliography}
\end{document}